\documentclass[10pt]{amsart}
\usepackage{amsthm,amssymb,amsmath}

\textwidth 150mm
\oddsidemargin 5mm
\evensidemargin 5mm
\textheight 230mm
\topmargin -10mm

\newtheorem{theorem}{Theorem}[section]
\newtheorem{lemma}[theorem]{Lemma}

\newtheorem{corollary}[theorem]{Corollary}

\theoremstyle{definition}

\begin{document}

\title[Dependence of solution on a parameter]{On uniform continuous dependence of solution of Cauchy problem on a parameter}

\author{V.~Ya.~Derr}

\address{Faculty of Mathematics, Udmurtia State University, Russia}

\email{derr@uni.udm.ru}

\begin{abstract}
Suppose that an $n$-dimensional Cauchy problem
$$\dfrac{dx}{dt}=f(t,x,\mu)\quad (t\in \mathcal I,\;\mu\in \mathcal
M),\quad x(t_0)=x^0$$ satisfies the conditions that guarantee existence, uniqueness and continuous dependence of solution $x(t,t_0,\mu)$ on parameter $\mu$ in an open set $\mathcal M$.
We show that if one additionally 
requires that family
$\{f(t,x,\cdot)\}_{(t,x)}$ is equicontinuous, then the dependence of solution
$x(t,t_0,\mu)$ on parameter $\mu \in \mathcal M$ is
uniformly continuous. 

An analogous result for a linear $n\times
n$-dimensional Cauchy problem
$$\dfrac{dX}{dt}=A(t,\mu)X+\Phi(t,\mu)\quad (t\in \mathcal I,\;\mu\in
\mathcal M),\quad X(t_0,\mu)=X^0(\mu)$$
is valid under the assumption that the integrals 
$\int_{\mathcal I} \|A(t,\mu_1)-A(t,\mu_2)\|\,dt $ and $\int_{\mathcal I}
\|\Phi(t,\mu_1)-\Phi(t,\mu_2)\|\,dt$ can be made smaller than any given constant (uniformly with respect to $\mu_1$, $\mu_2 \in \mathcal M$)
provided that $\|\mu_1-\mu_2\|$ is
sufficiently small.
\end{abstract}

\keywords{uniform continuity, equicontinuity}

\maketitle

\section{Introduction}

Let $\mathcal I\subset \mathbb R$ be an open interval, let $\mathcal X\subset \mathbb R^n, \mathcal M \subset \mathbb R^m$ be domains (i.e.~open connected subsets). We set 
$\mathcal D\doteq \mathcal I\times \mathcal X$, $\mathcal G\doteq \mathcal I\times \mathcal M$, $\mathcal O\doteq \mathcal I\times \mathcal X\times \mathcal M$. Suppose that we are given $f:\mathcal O\to \mathbb R^n,\;t_0\in\mathcal I,\;x^0\in \mathcal X.$  

We consider Cauchy problem
\begin{equation}
\label{ex1}
\dfrac{dx}{dt}=f(t,x,\mu)\quad (t\in \mathcal I,\;\mu\in \mathcal M),\quad x(t_0)=x^0.
\end{equation}
The questions related to existence and uniqueness of solution of (\ref{ex1}), its extension by continuity up to the boundary of $\mathcal D$ (to the maximal interval of existence), and its continuous dependence on a paramter $\mu\in\mathcal M$ are discussed, e.g.~in \cite[p.53--73]{coddl} (see also \cite[p.\,19--28,\,119]{hartm}). The next theorem contains a number of basic results on (\ref{ex1}) that can be found in \cite{coddl}.

\begin{theorem}
\label{thm1}
Suppose that function $f$ satisfies:

1) $f$ is measurable on $\mathcal O;$ 

2) $f$ is continuous in $(x,\mu)\in \mathcal X\times \mathcal M$ for every fixed $t\in \mathcal I;$

3) there exists a Lebesgue locally summable function $m$ on $\mathcal I$ such that
$$
\|f(t,x,\mu)\|\le m(t)\qquad  \bigl((x,\mu)\in \mathcal X\times \mathcal M\bigr);
$$

4) for almost every $t\in\mathcal I$ and every $\mu\in\mathcal M$ function $f$ satisfies the Lipschitz condition in $x$:
$$
\|f(t,x',\mu)-f(t,x'',\mu)\|\le L\|x'-x''\|\;\;x',x''\in\mathcal X,
$$ 
where Lipschitz constant $L$ is independent of $t$ and $\mu.$

Then there exists a closed interval $[a,b]\subset \mathcal I$ such that for every $\mu\in\mathcal M$ problem $(\ref{ex1})$ has the unique solution $x(t,\mu)$ on $[a,b]$ that is absolutely continuous in variable $t$, and depends 
continuously on $\mu\in\mathcal M$.
{\rm (}$\mathcal I$ is the maximal interval of existence of $x$.{\rm )}
\end{theorem}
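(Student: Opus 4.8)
The plan is to recast the Cauchy problem (\ref{ex1}) as the integral equation
$$x(t)=x^0+\int_{t_0}^t f(s,x(s),\mu)\,ds$$
and to treat it for each fixed $\mu$ by a contraction argument, the Carathéodory-type hypotheses 1)--3) guaranteeing that $s\mapsto f(s,y(s),\mu)$ is measurable and dominated by $m$ whenever $y$ is continuous. First I would fix $r>0$ with $\overline{B}(x^0,r)\subset\mathcal X$ (possible since $\mathcal X$ is open) and, using local summability of $m$, choose a closed interval $[a,b]\subset\mathcal I$ containing $t_0$ small enough that $\int_a^b m(s)\,ds\le r$. Then the operator $T_\mu$ given by the right-hand side maps $C\bigl([a,b],\overline{B}(x^0,r)\bigr)$ into itself, because $\|(T_\mu y)(t)-x^0\|\le\int_a^b m(s)\,ds\le r$, and $T_\mu y$ is continuous as the indefinite integral of an $L^1$ function.

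For existence and uniqueness I would show $T_\mu$ is a contraction. Hypothesis 4) yields
$$\|(T_\mu y_1)(t)-(T_\mu y_2)(t)\|\le L\int_{t_0}^t\|y_1(s)-y_2(s)\|\,ds,$$
so, equipping $C([a,b],\mathbb R^n)$ with the weighted norm $\|y\|_\lambda=\sup_{t}e^{-\lambda|t-t_0|}\|y(t)\|$ for $\lambda>L$, one turns $T_\mu$ into a contraction; the Banach fixed-point theorem then gives a unique fixed point $x(\cdot,\mu)$. Absolute continuity in $t$ is immediate, since $x(t,\mu)-x^0$ is the indefinite integral of $s\mapsto f(s,x(s,\mu),\mu)$, which is dominated by $m\in L^1[a,b]$.

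The heart of the matter, and the step I expect to be the main obstacle, is continuous dependence on $\mu$. Fixing $\mu_0\in\mathcal M$ and writing $u(t)=\|x(t,\mu)-x(t,\mu_0)\|$, I would decompose $x(t,\mu)-x(t,\mu_0)=I_1(t)+I_2(t)$, where
$$I_1(t)=\int_{t_0}^t\bigl[f(s,x(s,\mu),\mu)-f(s,x(s,\mu_0),\mu)\bigr]\,ds,$$
$$I_2(t)=\int_{t_0}^t\bigl[f(s,x(s,\mu_0),\mu)-f(s,x(s,\mu_0),\mu_0)\bigr]\,ds.$$
The Lipschitz condition 4) bounds $\|I_1(t)\|$ by $L\int_{t_0}^t u(s)\,ds$. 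For $I_2$, set $\delta(\mu)=\int_a^b\|f(s,x(s,\mu_0),\mu)-f(s,x(s,\mu_0),\mu_0)\|\,ds$; by continuity in $\mu$ (hypothesis 2) the integrand tends to $0$ pointwise as $\mu\to\mu_0$, and being dominated by $2m(s)\in L^1[a,b]$, the dominated convergence theorem gives $\delta(\mu)\to0$. Hence $u(t)\le\delta(\mu)+L\int_{t_0}^t u(s)\,ds$, and Gronwall's inequality yields $\sup_{t\in[a,b]}u(t)\le\delta(\mu)\,e^{L(b-a)}\to0$ as $\mu\to\mu_0$, which is exactly the asserted continuous dependence. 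The delicate point is that $\delta(\mu)\to0$ is extracted from mere pointwise continuity in $\mu$ via dominated convergence, with no control over a modulus of continuity uniform in $\mu$ --- precisely the weakness that the subsequent equicontinuity hypothesis of the paper is designed to repair in order to upgrade continuous dependence to \emph{uniform} continuous dependence.
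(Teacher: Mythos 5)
The paper gives no proof of this theorem: it is presented as a summary of known results and referred to Coddington--Levinson, so there is nothing internal to compare against. Your argument is correct and self-contained, and it is the natural route under hypothesis 4): the Carath\'eodory conditions 1)--3) make $s\mapsto f(s,y(s),\mu)$ measurable and dominated by $m$, the choice of $[a,b]$ with $\int_a^b m\le r$ keeps the Picard operator inside $C\bigl([a,b],\overline B(x^0,r)\bigr)$, the Bielecki weighted norm turns it into a contraction, and the splitting of $x(t,\mu)-x(t,\mu_0)$ into a Lipschitz term plus a term handled by dominated convergence, followed by Gronwall, gives continuous dependence. The cited reference proves the existence part by approximate solutions and Arzel\`a--Ascoli (which works without any Lipschitz condition) and obtains continuous dependence from a compactness argument; your fixed-point-plus-Gronwall version is shorter and is available precisely because condition 4) is assumed, and the explicit bound $\sup_t u(t)\le\delta(\mu)e^{L(b-a)}$ you extract is exactly the quantity whose lack of uniformity in $\mu_0$ motivates the equicontinuity hypothesis added in Theorem \ref{thm2}. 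Your closing observation correctly identifies where the argument fails to give uniformity.
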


Here and below $\|\cdot\|$ denotes a norm in $\mathbb R^k$. We use the same notation for
norm of a matrix with entries in $\mathbb R$.

The study of ordinary differential equations in the space of Colombeau generalized functions (see \cite{colombo}) requires that the solution $x$ of (\ref{ex1}) depends on \textit{uniformly continuous} (cf.~Theorem \ref{thm1}, where the dependence of $x$ on $\mu$ is only \textit{continuous}). We note that it is essential for our purposes that $\mathcal M$ is an open subset (for a closed and bounded $\mathcal M$ the uniform continuous dependence of $x$ on $\mu$ would follow trivially from Cantor's theorem).

That under the assumptions of Theorem \ref{thm1} the dependence of $x$ on $\mu$ is not necessarily uniformly continuous is demonstrated by the following example. Consider Cauchy problem $$\dfrac{dx}{dt}=\sin\,\frac{1}{\mu},\;x(0)=0,\;\;t\in [0,1],\;\mu\in \mathcal M\doteq (0,1).$$ The assumptions of Theorem \ref{thm1} are satisfied: indeed, given $t>0$ and $\delta$, denote $$\mu_1=\dfrac{1}{\pi n},\;\mu_2=\dfrac{1}{\pi n+\frac{\pi}{2}}.$$ Then
$$|\mu_1-\mu_2|=\dfrac{1}{\pi n(2n+1)}<\delta$$ for a sufficiently large $n,$ and $$|x(t,\mu_1)-x(t,\mu_2)|=\left|t\sin\,\frac{1}{\mu_1}-t\sin\,\frac{1}{\mu_2}\right|=~t.$$ The solution $x(t,\mu)=t\sin\,\frac{1}{\mu}\;\;(t\in [0,1])$, however, is not uniformly continuous on $\mathcal M=(0,1).$

\medskip

\textit{The following question naturally arises: what additional assumptions are required (cf.~Theorem \ref{thm1}) in order to ensure the \textit{uniform continuous} dependence of solution $x(t,\mu)$ on parameter $\mu$? 
Our answer to this question is proposed below.}

\section{Nonlinear Cauchy problem}
\label{s1}


Suppose that we are given a map $F:\mathcal O\to \mathbb R^{n\times p}.$
We say that the family  $\{F(t,x,\cdot)\}_{(t,x)\in \mathcal D}$ of maps $\mathcal M\to \mathbb R^{n\times p}$ is equicontinuous if
\begin{equation}\label{ex2}
(\forall \varepsilon >0)\;(\exists \delta >0)\;\;\bigl(\forall (t,x)\in\mathcal D,\;\forall \mu_1,\mu_2\in \mathcal M:\;\|\mu_1-\mu_2\|<\delta\bigr)\;\;\;\bigl(\|F(t,x,\mu_1)-F(t,x,\mu_2)\|<\varepsilon\bigr).
\end{equation}
For example, if  $F(t,x,\mu)=g(t,x)h(\mu),$ where $g:\mathcal D\to \mathbb R^{n\times m},\;h:\mathcal M\to \mathbb R^{m\times p},$ function $g$ is continuous and bounded on $\mathcal D,$ then $\{F(t,x,\cdot)\}_{(t,x)\in \mathcal D}$ 
is equicontinuous if and only if function $h$ is uniformly continuous on $\mathcal M.$ 

\begin{lemma}
\label{l1}
Suppose that $F$ satisfies Lipschitz condition in $\mu\in\mathcal M$ uniformly with respect to $(t,x)\in\mathcal D,$ i.e.~
$\|F\bigl(t,x,\mu_1\bigr)-F\bigl(t,x,\mu_2\bigr)\|\le M\|\mu_1-\mu_2\|\;\;((t,x)\in \mathcal D),$ where $M$ is independent of $t$ and $x.$  Then  $\{F(t,x,\cdot)\}_{(t,x)\in \mathcal D}$ is equicontinuous.
\end{lemma}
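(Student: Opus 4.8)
The plan is to unwind the definition of equicontinuity in (\ref{ex2}) and read the required $\delta$ directly off the uniform Lipschitz constant $M$. The crucial feature of the hypothesis is that the bound $\|F(t,x,\mu_1)-F(t,x,\mu_2)\|\le M\|\mu_1-\mu_2\|$ holds with the \emph{same} constant $M$ for every $(t,x)\in\mathcal D$; so there is no dependence on $(t,x)$ to control, which is precisely the uniformity that (\ref{ex2}) demands.

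Concretely, I would fix an arbitrary $\varepsilon>0$. If $M=0$ then $F$ does not depend on $\mu$ and (\ref{ex2}) holds with any choice of $\delta>0$; so we may assume $M>0$ and set $\delta\doteq\varepsilon/M$. Then for every $(t,x)\in\mathcal D$ and every pair $\mu_1,\mu_2\in\mathcal M$ with $\|\mu_1-\mu_2\|<\delta$, the Lipschitz hypothesis gives $\|F(t,x,\mu_1)-F(t,x,\mu_2)\|\le M\|\mu_1-\mu_2\|<M\delta=\varepsilon$. Since $\delta$ was selected before any $(t,x)$, $\mu_1$, $\mu_2$ were fixed, it depends on $\varepsilon$ (and $M$) alone and not on the point $(t,x)\in\mathcal D$, so (\ref{ex2}) is verified.

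There is essentially no obstacle here: the conclusion is an immediate quantitative consequence of the uniform Lipschitz estimate, and the only points deserving a moment's attention are the degenerate case $M=0$ and the innocuous convention that the Lipschitz constant may be taken strictly positive when dividing by it.
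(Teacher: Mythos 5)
Your proposal is correct and follows essentially the same argument as the paper: take $\delta=\varepsilon/M$ and apply the uniform Lipschitz bound directly. The only difference is your (harmless, slightly more careful) treatment of the degenerate case $M=0$, which the paper silently ignores.
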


\begin{proof}
Let $\varepsilon>0$ be arbitrary, let $\mu_1,\,\mu_2\in \mathcal M$ be such that $\|\mu_1-\mu_2\|<\delta \doteq\frac{\varepsilon}{M}.$ Then $\|F\bigl(t,x,\mu_1\bigr)-F\bigl(t,x,\mu_2\bigr)\|\le M\|\mu_1-\mu_2\|<\varepsilon.$
\end{proof}

Let us note that family  $\{F(t,x,\cdot)\}_{(t,x)\in \mathcal D}$ can be equicontinuous even if function $F$ does not satisfy the Lipschitz condition in variable $\mu.$ 
For instance, if in the example above we set $m=p=1, h(\mu)\doteq \mu\sin\frac{\pi}{\mu},\mathcal M=(0,1),$ then $h$ and, consequently, $F$ do not satisfy the Lipschitz condition in variable $\mu$ on $\mathcal M,$ 
although $h$ is uniformly continuous on $(0,1)$, hence the corresponding function family is equicontinuous.

\begin{theorem} 
\label{thm2}
Suppose that function $f$ satisfies conditions $1)$ --- $4)$ of Theorem \ref{thm1} and, additionally, condition $5)$: family $\{f(t,x,\cdot)\}_{(t,x)\in \mathcal D}$ is equicontinuous on $\mathcal M.$

Then the dependence of the solution $x(t,\mu)$ of Cauchy problem $(\ref{ex1})$ on $\mu \in \mathcal M$ is uniform continuous {\rm(}uniformly with respect to variable $t\in [a,b]${\rm)}.
\end{theorem}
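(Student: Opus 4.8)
The plan is to pass to the equivalent integral form of the Cauchy problem and then estimate the difference of two solutions by means of Grönwall's inequality, taking care that every constant entering the bound is independent of both $t$ and the parameters. Since $x(\cdot,\mu)$ is absolutely continuous on $[a,b]$ and $\|f(s,x(s,\mu),\mu)\|\le m(s)$ is locally summable, each solution satisfies
$$x(t,\mu)=x^0+\int_{t_0}^t f\bigl(s,x(s,\mu),\mu\bigr)\,ds\qquad (t\in[a,b]).$$
Fixing $\mu_1,\mu_2\in\mathcal M$ and setting $u(t)\doteq\|x(t,\mu_1)-x(t,\mu_2)\|$, I would subtract the two integral identities and insert the intermediate term $f(s,x(s,\mu_1),\mu_2)$, so that the integrand becomes a sum of two differences: one in which only the parameter changes (from $\mu_1$ to $\mu_2$) at the fixed point $(s,x(s,\mu_1))$, and one in which only the state argument changes (from $x(s,\mu_1)$ to $x(s,\mu_2)$) at the fixed parameter $\mu_2$.

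The first difference is handled by condition $5)$: given $\varepsilon'>0$, equicontinuity furnishes a $\delta>0$, \emph{depending on $\varepsilon'$ alone}, such that $\|\mu_1-\mu_2\|<\delta$ forces $\|f(s,x(s,\mu_1),\mu_1)-f(s,x(s,\mu_1),\mu_2)\|<\varepsilon'$ for every $s$; the decisive point is that this $\delta$ does not depend on the point $(s,x(s,\mu_1))\in\mathcal D$, and in particular not on $t$, $\mu_1$, or $\mu_2$. The second difference is handled by the Lipschitz condition $4)$, yielding the bound $L\,u(s)$. Collecting these estimates gives, for all $t\in[a,b]$,
$$u(t)\le\varepsilon'(b-a)+L\left|\int_{t_0}^t u(s)\,ds\right|.$$

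Grönwall's inequality then yields $u(t)\le\varepsilon'(b-a)\,e^{L(b-a)}$ uniformly in $t\in[a,b]$. Given $\varepsilon>0$, it remains to choose $\varepsilon'\doteq\varepsilon\bigl[(b-a)\,e^{L(b-a)}\bigr]^{-1}$ and to take the corresponding $\delta$ from equicontinuity; then $\|\mu_1-\mu_2\|<\delta$ implies $\|x(t,\mu_1)-x(t,\mu_2)\|<\varepsilon$ for every $t\in[a,b]$, which is precisely uniform continuity in $\mu$, uniform with respect to $t$. I expect the only genuinely delicate point to be the verification that condition $5)$ truly decouples the parameter dependence from the trajectory: the bound $\varepsilon'$ must hold at the \emph{a priori} unknown points $x(s,\mu_1)$, and this is legitimate exactly because equicontinuity is postulated over all $(t,x)\in\mathcal D$ simultaneously, so a single $\delta$ serves every trajectory. (That all trajectories remain in $\mathcal X$ throughout $[a,b]$ is already guaranteed by the a priori bound coming from condition $3)$, as in the proof of Theorem \ref{thm1}.)
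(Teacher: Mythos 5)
Your proof is correct and follows essentially the same route as the paper: pass to the integral equation, split the integrand into a parameter-difference term controlled by the equicontinuity hypothesis and a state-difference term controlled by the Lipschitz condition, and conclude with the Gronwall--Bellman inequality. The only (immaterial) difference is which intermediate term you insert — you use $f(s,x(s,\mu_1),\mu_2)$ while the paper uses $f(s,x(s,\mu_2),\mu_1)$ — and your explicit rescaling of $\varepsilon'$ at the end is a cosmetic refinement of the paper's final inequality.
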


\begin{proof}
Let us fix an arbitrary $\varepsilon >0$. Suppose that $\delta >0$ in accordance with condition (\ref{ex2}), and let
 $\mu_1,\mu_2\in\mathcal M$ be such that $\|\mu_1-\mu_2\|<\delta.$

Since $x(t,\mu_i)=x^0+\int\limits_{t_0}^t f\bigl(s,x(s,\mu_i),\mu_i\bigr)\,ds\;\;(i=1,2)$ then, assuming first that $t>t_0$ and using conditions 4) and 5), we obtain an estmiate
\begin{multline*}
\|x(t,\mu_1)-x(t,\mu_2)\|\le \int_{t_0}^t \bigl\|f\bigl(s,x(s,\mu_1),\mu_1\bigr)-f\bigl(s,x(s,\mu_2),\mu_2\bigr)\bigr\|\,ds\le \\   \le
\int_{t_0}^t \bigl\|f\bigl(s,x(s,\mu_1),\mu_1\bigr)-f\bigl(s,x(s,\mu_2),\mu_1\bigr)\bigr\|\,ds+\int_{t_0}^t \bigl\|f\bigl(s,x(s,\mu_1),\mu_2\bigr)-f\bigl(s,x(s,\mu_2),\mu_2\bigr)\bigr\|\,ds < \\ <
L\int_{t_0}^t\|x(t,\mu_1)-x(t,\mu_2)\|\,ds+\varepsilon (b-a).
\end{multline*}
Now, using Gronwall-Bellman inequality \cite[p. 37]{hartm} we obtain 
$$
\|x(t,\mu_1)-x(t,\mu_2)\|<\varepsilon (b-a)e^{L(b-a)}.
$$
In the case $t<t_0$ the argument is analogous. The obtained inequality immediately yields the uniform continuous dependence of $x$ on $\mu \in \mathcal M$ {\rm(}uniformly with respect to $t\in [a,b]${\rm)}.
\end{proof}

Using Lemma  \ref{l1}, we obtain
\begin{corollary}
\label{sl1}
{\it Suppose that function $f$ satisfies conditions $1)$--$4)$ of Theorem \ref{thm1} and a Lipschitz condition
$$
\bigl\|f(t,x,\mu_1)-f(t,x,\mu_2)\bigr\|\le M\|\mu_1-\mu_2\|\quad \bigl((t,x)\in \mathcal D,\;\;\mu_1,\mu_2\in\mathcal M\bigr),
$$
where $M$ is independent of $t$ and $x.$ Then the assertion of Theorem \ref{thm2} holds.}
\end{corollary}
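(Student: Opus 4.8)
The plan is to reduce this statement to a direct application of the two results already established, namely Lemma \ref{l1} and Theorem \ref{thm2}. The key observation is that the hypothesized Lipschitz estimate on $f$ in the variable $\mu$, namely $\|f(t,x,\mu_1)-f(t,x,\mu_2)\|\le M\|\mu_1-\mu_2\|$ with $M$ independent of $(t,x)$, is precisely the hypothesis of Lemma \ref{l1} applied to $F=f$. Here one takes $p=1$, so that $f$ is viewed as a map $\mathcal O\to\mathbb R^{n\times 1}\cong\mathbb R^n$.

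First I would invoke Lemma \ref{l1} to conclude that the family $\{f(t,x,\cdot)\}_{(t,x)\in\mathcal D}$ is equicontinuous on $\mathcal M$. This is exactly condition $5)$ of Theorem \ref{thm2}. Since conditions $1)$--$4)$ of Theorem \ref{thm1} are assumed outright in the statement of the corollary, all five hypotheses of Theorem \ref{thm2} are thereby satisfied. It then remains only to quote Theorem \ref{thm2}, whose conclusion---the uniform continuous dependence of the solution $x(t,\mu)$ on $\mu\in\mathcal M$, uniformly in $t\in[a,b]$---is identical to the assertion of the corollary.

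No genuine obstacle arises: the Lipschitz hypothesis is strictly stronger than equicontinuity, and the entire content of the corollary is the recognition that it supplies, through Lemma \ref{l1}, the missing fifth hypothesis of Theorem \ref{thm2}. The only point deserving a word of care is the identification $\mathbb R^n=\mathbb R^{n\times 1}$, so that Lemma \ref{l1}, stated for matrix-valued $F$, applies to the vector field $f$; this is immediate from the convention, noted after Theorem \ref{thm1}, that the same norm $\|\cdot\|$ is used for vectors and for matrices.
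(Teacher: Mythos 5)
Your proposal is correct and is exactly the paper's own argument: the corollary is stated immediately after the phrase ``Using Lemma \ref{l1}, we obtain,'' i.e.\ the Lipschitz condition in $\mu$ yields equicontinuity of $\{f(t,x,\cdot)\}_{(t,x)\in\mathcal D}$ via Lemma \ref{l1}, which is condition 5) of Theorem \ref{thm2}, and that theorem then gives the conclusion. Nothing further is needed.
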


By the remark above the additional assumption 5) of Theorem \ref{thm2} is weaker than the additional assumption of Corollary \ref{sl1}. 
Indeed, for the Cauchy problem
$$\dfrac{dx}{dt}=\mu\sin\,\frac{1}{\mu},\;x(0)=0,\;\;t\in [0,1],\;\mu\in \mathcal M\doteq (0,1)$$ the assumptions of Theorem \ref{thm2} are satisfied, while the assumptions of Corollary \ref{sl1} are not.

\section{Linear Cauchy problem}
\label{s2}

We now consider the linear variant of problem (\ref{ex1}):
\begin{equation}\label{linex1}
\dfrac{dx}{dt}=A(t,\mu)x+\varphi(t,\mu)\quad (t\in \mathcal I,\;\mu\in \mathcal M),\quad x(t_0)=x^0.
\end{equation}
The solution of (\ref{linex1}) exists on the whole interval $\mathcal I$ and is possibly unbounded. Thus, in general one can not expect that family
$$
\{f(t,x,\cdot)\}_{(t,x)\in \mathcal D}=\{A(t,\cdot)x+\varphi(t,\cdot)\}_{(t,x)\in \mathcal D}
$$ 
is equicontinuous. Of course, we can restrict this family to a closed subinterval $[a,b]\subset \mathcal I$; then solution $x$ of (\ref{linex1}) is bounded on $[a,b]$ and we can apply Theorem \ref{thm2} and Corollary \ref{sl1}. It is, however, desirable to have a sufficient condition that ensures the uniform continuous dependence of
$x$ on a parameter, when the argument $t$ varies in the whole interval $\mathcal I.$

\vskip 10pt
It will be convenient to consider a matrix-valued analogue of problem (\ref{linex1}):
\begin{equation}\label{linex16}
\dfrac{dX}{dt}=A(t,\mu)X+\Phi(t,\mu)\quad (t\in \mathcal I,\;\mu\in \mathcal M),\quad X(t_0,\mu)=X^0(\mu),
\end{equation}
where $A,\,\Phi:\mathcal I\times \mathcal M\to \mathbb R^{n\times n}$ are summable in $t$ over $\mathcal I$ for every $\mu\in \mathcal M,$ $X:\mathcal I\times \mathcal M\to \mathbb R^{n\times n}$ is absolutely continuous in $t$ over $\mathcal I$ for every $\mu\in \mathcal M$, and $X^0:\mathcal M\to \mathbb R^{n\times n}$ 
is continuous and bounded on $\mathcal M.$

Let $\mathfrak X$ be the set of absolutely continuous on $\mathcal I$ for all $\mu\in \mathcal M$  $n\times n$-matrices $X(t,\mu)$  endowed with metric
$$
\rho\bigl(X(t,\mu_1),\,X(t,\mu_2)\bigr)\doteq \widehat{\rho}(\mu_1,\mu_2)\doteq \|X(t_0,\mu_1)-X(t_0,\mu_2)\|+
\int_{\mathcal I}\|\dot{X}(t,\mu_1)-\dot{X}(t,\mu_2)\|\,dt.
$$ 
This is a complete metric space. We denote by $\mathfrak X_{t_0}\subset \mathfrak X$ the subspace of non-degenerate $n\times n$-matrices $X(t,\mu)$ normed at point $t_0$ by the condition  $X(t_0,\mu)=E$, where $E$ is the identity matrix.
The induced metric in $\mathfrak X_{t_0}$ is given by the formula
$$
\rho\bigl(X(\cdot,\mu_1),\,X(\cdot,\mu_2)\bigr)\doteq \widehat{\rho}(\mu_1,\mu_2)\doteq
\int_{\mathcal I}\|\dot{X}(t,\mu_1)-\dot{X}(t,\mu_2)\|\,dt.
$$ 
Further, let $\mathfrak A$ be the set of summable on $\mathcal I$ for all $\mu \in \mathcal M$ $n\times n$ matrices $A(t,\mu)$ endowed
with norm
$$\mathfrak n (A)=\;\bigl(\widehat{\mathfrak n}(\mu)\bigr)\;=\displaystyle{\int_{\mathcal I}}\|A(t,\mu)\|\,dt,$$ 
let $\mathfrak A_0$ be the space of bounded and continuous  on $\mathcal M$ $n \times n$ matrices with norm
$\|X^0\|\doteq \underset{\mu\in \mathcal M}{\sup}\|X^0(\mu)\|$.
Clearly, spaces $\mathfrak A$, $\mathfrak A_0$ are Banach.
It follows from Theorems \ref{thm1} and \ref{thm2} that Cauchy problem (\ref{linex16})
determines a continuous map 
$\mathfrak F(\mu):\mathfrak A\times \mathfrak A\times \mathfrak A_0\to\mathfrak X$ 
that depends continuously on $\mu \in \mathcal M$. Similarly, Cauchy problem
\begin{equation}\label{linex16o}
\dfrac{dX}{dt}=A(t,\mu)X\qquad (t\in \mathcal I,\;\mu\in \mathcal M),\quad X(t_0,\mu)=X^0(\mu)
\end{equation}
can be viewed as a continuous (bijective) map
$\mathfrak F_0(\mu):\mathfrak A\times \mathfrak A_0\to\mathfrak X_{t_0}$  
that depends on $\mu \in \mathcal M$ continuously.
Our goal is to establish the conditions under which these maps depend on $\mu$ \textit{uniformly} continuously on $\mathcal M$.

\medskip

We say that $B \in \mathcal A$ is \textit{integrally uniformly continuous} on $\mathcal M$ if the following condition is satisfied:
$$
(\forall \varepsilon >0)\;\;(\exists \delta>0)\;\;(\forall \mu_1,\mu_2\in\mathcal M:
\|\mu_1-\mu_2\|<\delta)\;\;\;\left(\int_{\mathcal I}\|B(t,\mu_1)-B(t,\mu_2)\|\,dt<\varepsilon\right).
$$
\begin{theorem} 
\label{thm3}
Suppose that 

1) functions $\widehat{\mathfrak n},\,\varphi,\,\eta: \mathcal M\to [0,+\infty),$ where $\varphi (\mu)=\displaystyle{\int_{\mathcal I}}\|\Phi(t,\mu)\|\,dt,\quad \eta (\mu)\doteq \|X^0(\mu)\|,$  
are bounded on $\mathcal M$;

2) function $X^0\in \mathfrak A_0$ is uniformly continuous, and functions $A,\,\Phi \in \mathfrak A$ are integrally uniformly continuous on $\mathcal M.$ 

Then the solution $Y(t,\mu)$ of problem $(\ref{linex16})$ is uniformly continuous
in $\mu \in \mathcal M$ (uniformly with respect to $t\in \mathcal I$), and maps $\mathfrak F_0(\mu)(A,\,X^0)$ and $\mathfrak F(\mu)(A,\,\Phi,\,X^0)$ are uniformly continuous on $\mathcal M$
{\rm(}uniformly with respect to $t\in \mathcal I${\rm)}.
\end{theorem}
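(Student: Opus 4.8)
The plan is to run the Gronwall scheme from the proof of Theorem~\ref{thm2}, but over the whole (possibly unbounded) interval $\mathcal I$, replacing the pointwise smallness of~(\ref{ex2}) by the integral smallness supplied by hypothesis~2). Writing the solution of~(\ref{linex16}) in integral form $Y(t,\mu)=X^0(\mu)+\int_{t_0}^t\bigl(A(s,\mu)Y(s,\mu)+\Phi(s,\mu)\bigr)\,ds$, I would first extract a uniform a priori bound on $\|Y(t,\mu)\|$. Taking norms for $t>t_0$ and estimating $\int_{t_0}^t\|\Phi(s,\mu)\|\,ds\le\varphi(\mu)$ gives
$$\|Y(t,\mu)\|\le \eta(\mu)+\varphi(\mu)+\int_{t_0}^t\|A(s,\mu)\|\,\|Y(s,\mu)\|\,ds,$$
so the Gronwall--Bellman inequality yields $\|Y(t,\mu)\|\le\bigl(\eta(\mu)+\varphi(\mu)\bigr)e^{\widehat{\mathfrak n}(\mu)}$. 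If $N,P,H$ bound $\widehat{\mathfrak n},\varphi,\eta$ on $\mathcal M$ (hypothesis~1), then $\|Y(t,\mu)\|\le K\doteq(H+P)e^{N}$ uniformly in $t\in\mathcal I$ and $\mu\in\mathcal M$.

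Next I would estimate the difference of two solutions. With $Y_i\doteq Y(\cdot,\mu_i)$, $A_i\doteq A(\cdot,\mu_i)$, $\Phi_i\doteq\Phi(\cdot,\mu_i)$, the splitting $A_1Y_1-A_2Y_2=A_1(Y_1-Y_2)+(A_1-A_2)Y_2$ gives, for $t>t_0$,
$$\|Y_1(t)-Y_2(t)\|\le C+\int_{t_0}^t\|A_1(s)\|\,\|Y_1(s)-Y_2(s)\|\,ds,\qquad C\doteq\alpha+K\beta+\gamma,$$
where $\alpha\doteq\|X^0(\mu_1)-X^0(\mu_2)\|$, $\beta\doteq\int_{\mathcal I}\|A_1-A_2\|\,dt$, $\gamma\doteq\int_{\mathcal I}\|\Phi_1-\Phi_2\|\,dt$; here the bound $\|Y_2\|\le K$ controls the $(A_1-A_2)Y_2$ term and lets me extend its integral to all of $\mathcal I$. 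A second application of Gronwall--Bellman, with coefficient $\|A_1(s)\|$ whose integral over $\mathcal I$ is $\widehat{\mathfrak n}(\mu_1)\le N$, produces the $t$-uniform estimate $\|Y_1(t)-Y_2(t)\|\le Ce^{N}$. By hypothesis~2), each of $\alpha,\beta,\gamma$ is made arbitrarily small once $\|\mu_1-\mu_2\|$ is small enough, so $Y$ depends on $\mu$ uniformly continuously, uniformly in $t\in\mathcal I$.

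It then remains to pass to the metrics of $\mathfrak X$ and $\mathfrak X_{t_0}$. For $\mathfrak F(\mu)$ the distance $\rho$ is $\alpha+\int_{\mathcal I}\|\dot Y_1-\dot Y_2\|\,dt$; substituting $\dot Y_i=A_iY_i+\Phi_i$ and using the same splitting, I would bound the derivative term by
$$\int_{\mathcal I}\|A_1\|\,\|Y_1-Y_2\|\,dt+\int_{\mathcal I}\|A_1-A_2\|\,\|Y_2\|\,dt+\gamma\le Ne^{N}C+K\beta+\gamma,$$
after inserting the $t$-uniform bounds $\|Y_1-Y_2\|\le Ce^{N}$ and $\|Y_2\|\le K$. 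Every summand tends to $0$ with $C$, so $\mathfrak F(\mu)$ is uniformly continuous on $\mathcal M$. For $\mathfrak F_0(\mu)$ one has $\Phi\equiv0$ and a $t_0$-normalized image, so the $\Phi$- and $X^0$-terms drop out and the metric of $\mathfrak X_{t_0}$ reduces to $\int_{\mathcal I}\|\dot Y_1-\dot Y_2\|\,dt\le Ne^{N}K\beta$, settled by the integral uniform continuity of $A$ alone.

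The only genuinely new difficulty, relative to Theorem~\ref{thm2}, is the a priori bound $K$ of the first step: it is exactly the boundedness hypothesis~1) that keeps $\|Y_2\|$ under control over a possibly infinite interval and thereby renders the term $\int_{\mathcal I}\|A_1-A_2\|\,\|Y_2\|\,dt$ --- which has no pointwise-in-$t$ counterpart as in Theorem~\ref{thm2} --- integrally small. Once this bound is secured, both nested Gronwall estimates are routine.
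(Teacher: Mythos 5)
Your argument is correct, but it takes a genuinely different and more economical route than the paper. The paper works through the fundamental matrix: it bounds $\|X(t,\mu)\|$ and $\|X^{-1}(t,\mu)\|$ via Gronwall, estimates the differences $X(\cdot,\mu_1)-X(\cdot,\mu_2)$ and $X^{-1}(\cdot,\mu_1)-X^{-1}(\cdot,\mu_2)$ by treating each as the solution of a non-homogeneous matrix equation and applying the Cauchy (variation-of-constants) formula, deduces bounds on the Cauchy matrix $C(t,s,\mu)=X(t,\mu)X^{-1}(s,\mu)$ and on $C(\cdot,\cdot,\mu_1)-C(\cdot,\cdot,\mu_2)$, and only then estimates $Y$ through the representation $Y(t,\mu)=X(t,\mu)X^0(\mu)+\int_{t_0}^tC(t,s,\mu)\Phi(s,\mu)\,ds$ (Lemmas \ref{lrisch2}--\ref{lrisch5}). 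You bypass all of this by applying Gronwall--Bellman directly to the integral equation for $Y$ itself --- once for the a priori bound $\|Y\|\le(\eta+\varphi)e^{\widehat{\mathfrak n}}$ and once for the difference $Y_1-Y_2$ after the splitting $A_1Y_1-A_2Y_2=A_1(Y_1-Y_2)+(A_1-A_2)Y_2$ --- and then reuse the same splitting to control $\int_{\mathcal I}\|\dot Y_1-\dot Y_2\|\,dt$ for the metrics of $\mathfrak X$ and $\mathfrak X_{t_0}$. Both proofs hinge on the same two ingredients (the uniform a priori bound on $\|Y_2\|$ coming from hypothesis 1), which converts $\int_{\mathcal I}\|A_1-A_2\|\,\|Y_2\|\,dt$ into a multiple of $\mathfrak a(\mu_1,\mu_2)$, and Gronwall with exponent $\widehat{\mathfrak n}(\mu_1)\le K$), so the estimates obtained are of the same quality; what the paper's longer route buys is the collection of intermediate bounds on $X$, $X^{-1}$ and the Cauchy matrix, and the separate estimate for the homogeneous problem in $\mathfrak X_{t_0}$ (Lemmas \ref{lrisch2} and \ref{lrisch3}), which the author explicitly flags as being of independent interest. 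One harmless slip: in your $\mathfrak F_0$ bound the term $K\beta$ coming from $\int_{\mathcal I}\|A_1-A_2\|\,\|Y_2\|\,dt$ should be added to $Ne^NK\beta$; the conclusion is unaffected since the total is still a fixed constant times $\beta$.
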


\begin{proof}
We choose a constant $K>0$ such that the following inequalities are satisfied:
\begin{equation}\label{linex16ner}
\widehat{\mathfrak n}(\mu)\leq K,\quad \varphi (\mu)\leq K,\quad \eta (\mu)\leq K,\quad \xi\doteq \|E\|\leq K.
\end{equation}
Denote
\begin{multline*}
\mathfrak a (\mu_1,\mu_2)\doteq\int_{\mathcal I} \|A(t,\mu_1)-A(t,\mu_2)\|\,dt,\quad \mathfrak f (\mu_1,\mu_2)\doteq\int_{\mathcal I} \|\Phi(t,\mu_1)-\Phi(t,\mu_2)\|\,dt,\\
 \mathfrak x (\mu_1,\mu_2)\doteq \|X^0(\mu_1)-X^0(\mu_2)\|.
\end{multline*}

In the next four lemmas we obtain a number of estimates needed to complete the proof of the theorem.
Some of these lemmas (e.g.~Lemmas \ref{lrisch3} and \ref{lrisch5}) are interesting in their own right.

Let $C(t,s,\mu)=X(t,\mu)X^{-1}(s,\mu)$ be the Cauchy matrix of the homogeneous system of differential equations (\ref{linex16o}) (let $X(t,\mu)$
be its fundamental matrix normed at point $t_0$).

\begin{lemma}
\label{lrisch2}
The following estimates hold:
\begin{equation}\label{linex9}
\|X(t,\mu)\|\leq \xi e^{\widehat{\mathfrak n}(\mu)}\leq Ke^K\quad (t\in\mathcal I,\,\mu\in \mathcal M);
\end{equation}
\begin{equation}\label{linex10}
\|X^{-1}(t,\mu)\|\leq \xi e^{\widehat{\mathfrak n}(\mu)}\leq Ke^K\quad (t\in\mathcal I, \,\mu\in \mathcal M);
\end{equation}
\begin{equation}\label{linex11}
\|X(t,\mu_1)-X(t,\mu_2\|\leq \xi^3e^{2\widehat{\mathfrak n}(\mu_1)+\widehat{\mathfrak n}(\mu_2)}\int_{\mathcal I} \|A(s,\mu_1)-A(s,\mu_2)\|\,ds\leq K^3e^{3K}\mathfrak a (\mu_1,\mu_2)
\end{equation}
$(t\in\mathcal I,\,\mu_1,\mu_2\in\mathcal M);$
\begin{equation}\label{linex12}
\|X^{-1}(t,\mu_1)-X^{-1}(t,\mu_2\|\leq \xi^3e^{2\widehat{\mathfrak n}(\mu_1)+\widehat{\mathfrak n}(\mu_2)}\int_{\mathcal I} \|A(s,\mu_1)-A(s,\mu_2)\|\,ds\leq K^3e^{3K}\mathfrak a (\mu_1,\mu_2)
\end{equation}
$(t\in\mathcal I,\,\mu_1,\mu_2\in\mathcal M);$
\begin{equation}\label{linex13}
\|C(t,s,\mu)\|\leq \xi^2e^{2\widehat{\mathfrak n}(\mu)}\leq K^2e^{2K}\quad (t,s\in\mathcal I,\,\mu\in\mathcal M),
\end{equation}
\begin{multline}
\label{linex14}
\|C(t,s,\mu_1)-C(t,s,\mu_2)\|\leq \\  
\leq\xi^4e^{2\widehat{\mathfrak n}(\mu_1)+\widehat{\mathfrak n}(\mu_2)}\bigl(e^{\widehat{\mathfrak n}(\mu_1)}+e^{\widehat{\mathfrak n}(\mu_2)}\bigr)\int_{\mathcal I} \|A(s,\mu_1)-A(s,\mu_2)\|\,ds\leq 2K^4e^{4K}\mathfrak a (\mu_1,\mu_2),
\end{multline}
where $t,s\in\mathcal I,\,\mu_1,\mu_2\in\mathcal M$.
\end{lemma}

\begin{proof}[Proof of Lemma \ref{lrisch2}]
From differential equations
$\dot{X}(t,\mu_i)=A(t,\mu_i)X(t,\mu_i)$ $(i=1,2)$ we obtain the following differential equation (Cauchy problem)
\begin{multline}\label{linex7}
\dfrac{d\bigl(X(t,\mu_1)-X(t,\mu_2)}{dt}=A(t,\mu_1)\bigl(X(t,\mu_1)-X(t,\mu_2)\bigr)+\bigl(A(t,\mu_1)-A(t,\mu_2)\bigr)X(t,\mu_2),\\ \bigl(X(t_0,\mu_1)-X(t_0,\mu_2)\bigr)=0
\end{multline}
which we may consider as a non-homogeneous matrix differential equation with respect to the unknown function
$Y\doteq X(t,\mu_1)-X(t,\mu_2)$, having $A(t,\mu_1)$ as its matrix, with the right-hand side 
$
\bigl(A(t,\mu_1)-A(t,\mu_2)\bigr)X(t,\mu_2),
$ 
and having zero initial value.
Using Cauchy formula, we obtain
$$
Y(t)=\int_{t_0}^t X(t,\mu_1)X^{-1}(s,\mu_1)\bigl(A(s,\mu_1)-A(s,\mu_2)\bigr)X(s,\mu_2)\,ds,
$$
which implies that
\begin{equation}\label{linex8}
\|Y\|\leq \|X(t,\mu_1)\|\int_{t_0}^t\|X^{-1}(s,\mu_1)\|\cdot \|A(s,\mu_1)-A(s,\mu_2)\|\cdot \|X(s,\mu_2)\|\,ds.
\end{equation}
Now, since $X(t,\mu_1)=E+\int\limits_{t_0}^t A(s,\mu_1)X(s,\mu_1)\,ds,$ we have
$\|X(t,\mu_1)\|\leq \xi+\displaystyle{\int_{t_0}^t} \|A(s,\mu_1)\|\cdot \|X(s,\mu_1)\|\,ds.$ By Gronwall-Bellman inequality
(see, e.g.~\cite[p. 37]{hartm})
\begin{equation*}
\|X(t,\mu_1)\|\leq \xi\exp\left(\int_{t_0}^t \|A(s,\mu_1)\|\,ds\right)\leq \xi\exp\left(\int_{\mathcal I} \|A(s,\mu_1)\|\,ds\right) \leq \xi e^{\widehat{\mathfrak n}(\mu_1)}\quad (t\in\mathcal I).
\end{equation*}
Since $$\dfrac{dX^{-1}(t,\mu_1)}{dt}=-X^{-1}(t,\mu_1)A(t,\mu_1),$$ 
we can apply the above argument to function $X^{-1}(t,\mu_1)$, thus arriving to estimate (\ref{linex10}).

Using (\ref{linex9}),\,(\ref{linex10}),\,(\ref{linex8}), we obtain estimate (\ref{linex11}). 
A similar argument gives us (\ref{linex12}).

Now, it follows from (\ref{linex9})--(\ref{linex10}) that
$$
\|C(t,s,\mu)\|\leq \|X(t,\mu)\|\cdot \|X^{-1}(s,\mu)\|\leq \xi^2e^{2\widehat{\mathfrak n}(\mu)}\quad (t,s\in\mathcal I).
$$
Furthermore, using estimates
(\ref{linex11})  and (\ref{linex12}) we get
\begin{multline*}
\|C(t,s,\mu_1)-C(t,s,\mu_2)\|\!=\!\|X(t,\mu_1)\bigl(X^{-1}(s,\mu_1)-X^{-1}(s,\mu_2)\bigr)-\bigl(X(t,\mu_1)-X(t,\mu_2)\bigr)X^{-1}(s,\mu_2)\|\!\leq \\
\leq \|X(t,\mu_1)\|\cdot \|X^{-1}(s,\mu_1)-X^{-1}(s,\mu_2)\|+\|X(t,\mu_1)-X(t,\mu_2)\|\cdot \|X^{-1}(s,\mu_2)\|\leq \\   \leq
\xi^4e^{2\widehat{\mathfrak n} (\mu_1)+\widehat{\mathfrak n} (\mu_2)}\bigl(e^{\widehat{\mathfrak n}(\mu_1)}+e^{\widehat{\mathfrak n}(\mu_2)}\bigr)\int_{\mathcal I} \|A(s,\mu_1)-A(s,\mu_2)\|\,ds\quad (t,s\in\mathcal I).
\end{multline*}
The proof of Lemma \ref{lrisch2} is complete.
\end{proof}

In the next lemma we esimate from above the distance $\rho\bigl(X(\cdot,\mu_1),\,X(\cdot,\mu_2)\bigr)=\widehat{\rho}(\mu_1,\mu_2)$. 
\begin{lemma}
\label{lrisch3}
$$
\widehat{\rho}(\mu_1,\mu_2)\leq \xi e^{\widehat{\mathfrak n}(\mu_1)}\Bigl(\widehat{\mathfrak n}
(\mu_1)\xi^2e^{\widehat{\mathfrak n}(\mu_1)+\widehat{\mathfrak n}(\mu_2)}+1\Bigr)
\int_{\mathcal I} \|A(s,\mu_1)-A(s,\mu_2)\|\,ds\leq 
\bigl(K^4e^{3K}+Ke^K\bigr)\mathfrak a (\mu_1,\mu_2)
$$
$(\mu_1,\mu_2\in\mathcal M).$
\end{lemma}

\begin{proof}[Proof of Lemma \ref{lrisch3}]
Using estimates (\ref{linex7}), (\ref{linex8}) and (\ref{linex11}) we obtain
\begin{multline*}
\widehat{\rho}(\mu_1,\mu_2)=\int_{\mathcal I}\|\dot{X}(t,\mu_1)-\dot{X}(t,\mu_2)\|\,dt=\\=\int_{\mathcal I}\|A(t,\mu_1)\bigl(X(t,\mu_1)-X(t,\mu_2)\bigr)+\bigl(A(t,\mu_1)-A(t,\mu_2)\bigr)X(t,\mu_2)\|\,dt\leq \\
\int_{\mathcal I}\|A(t,\mu_1)\|\,\|(X(t,\mu_1)-X(t,\mu_2)\|\,dt+\int_{\mathcal I}\|A(t,\mu_1)-A(t,\mu_2)\|\,\|X(t,\mu_2)\|\,dt\leq \\
\leq \xi e^{\widehat{\mathfrak n}(\mu_1)}\Bigl(\widehat{\mathfrak n}
(\mu_1)\xi^2e^{\widehat{\mathfrak n}(\mu_1)+\widehat{\mathfrak n}(\mu_2)}+1\Bigr)
\int_{\mathcal I} \|A(s,\mu_1)-A(s,\mu_2)\|\,ds  
\end{multline*}
This completes the proof of Lemma \ref{lrisch3}.
\end{proof}

Next, we obtain estimates for the solution  $Y=Y(t,\mu)$ of non-homogeneous Cauchy problem (\ref{linex16}). 
\begin{lemma}
\label{lrisch4}
We have the following estimates:
\begin{equation}\label{linex18}
\|Y(t,\mu)\|\leq \|X^0(\mu)\|\,\xi e^{\mathfrak n (\mu)}+\xi^2e^{2\mathfrak n (\mu)}\cdot \int_{\mathcal I}\|\Phi(s,\mu)\|\,ds\leq K^2e^K+K^3e^{2K}\quad (t\in\mathcal I,\,\mu\in\mathcal M).
\end{equation}
\begin{multline}
\label{linex19} 
\|Y(t,\mu_1)-Y(t,\mu_2)\|\leq 
\xi e^{\mathfrak n (\mu_2)}\,\|X^0(\mu_1)-X^0(\mu_2)\|+\\+
\xi^3e^{2\mathfrak n (\mu_1)+\mathfrak n (\mu_2)}\left(\|X^0(\mu_1)\|+\xi e^{\mathfrak n (\mu_1)}+e^{\mathfrak n (\mu_2)}
\int_{\mathcal I}\|\Phi(s,\mu_1)\|\,ds\right)\int_{\mathcal I} \|A(s,\mu_1)-A(s,\mu_2)\|\,ds+\\+
 \xi^2e^{2\mathfrak n (\mu_2)}\cdot \int_{\mathcal I}\|\Phi(s,\mu_1)-\Phi(s,\mu_2)\|\,ds \leq \\ 
\leq K_1\mathfrak x (\mu_1,\mu_2)+K_2\mathfrak a (\mu_1,\mu_2)+K_3\mathfrak f (\mu_1,\mu_2)
 \quad (t\in\mathcal I,\,\mu_1,\mu_2\in\mathcal M),
\end{multline}
where $K_1=Ke^K,\;\;K_2=K^4e^{3K}(1+K+e^K),\;\;K_3=K^2e^{2K}$
\end{lemma}

\begin{proof}[Proof of Lemma \ref{lrisch4}]
Using Cauchy formula we obtain
\begin{equation}\label{linex17}
Y(t,\mu)=X(t,\mu)X^0(\mu)+\int_{t_0}^tC(t,s,\mu)\Phi(s,\mu)\,ds \quad (t\in\mathcal I,\,\mu\in\mathcal M).
\end{equation}
Further, according to (\ref{linex9}) and (\ref{linex13}) we have the estimate
\begin{multline*}
\|Y(t,\mu)\|\leq \|X(t,\mu)\|\,\|X^0(\mu)\|+\int_{t_0}^t\|C(t,s,\mu)\|\cdot \|\Phi(s,\mu)\|\,ds\leq \\   
\leq \|X^0(\mu)\|\,\xi e^{\mathfrak n (\mu)}+\xi^2e^{2\mathfrak n (\mu)}\cdot \int_{\mathcal I}\|\Phi(s,\mu)\|\,ds\quad (t\in\mathcal I,\,\mu\in\mathcal M).
\end{multline*}
Now, in virtue of (\ref{linex9}),\,(\ref{linex13}),\,(\ref{linex11}) and (\ref{linex14}) we have
\begin{multline*}
\|Y(t,\mu_1)-Y(t,\mu_2)\|\leq \|X(t,\mu_1)-X(t,\mu_2)\|\,\|X^0(\mu_1)\|+\|X(t,\mu_2)\|\,
\|X^0(\mu_1)-X^0(\mu_2)\|+\\+\underset{(t,s)\in=\mathcal I^2}{\max}\|C(t,s,\mu_1)-
C(t,s,\mu_2)\|\cdot \int_{\mathcal I}\|\Phi(s,\mu_1)\|\,ds+\\+\underset{(t,s)\in=\mathcal I^2}{\max}\|C(t,s,\mu_2)\|\cdot \int_{\mathcal I}\|\Phi(s,\mu_1)-\Phi(s,\mu_2)\|\,ds\leq \\   \leq\|X^0(\mu_1)\|\,\|E\|^3e^{2\mathfrak n (\mu_1)+\mathfrak n (\mu_2)}\int_{\mathcal I} \|A(s,\mu_1)-A(s,\mu_2)\|\,ds+
\|X^0(\mu_1)-X^0(\mu_2)\|\cdot \xi e^{\mathfrak n (\mu_2)}+\\+\xi^4e^{2\mathfrak n (\mu_1)+\mathfrak n (\mu_2)}\bigl(e^{\mathfrak n (\mu_1)}+e^{\mathfrak n (\mu_2)}\bigr)\int_{\mathcal I} \|A(s,\mu_1)-A(s,\mu_2)\|\,ds\cdot \int_{\mathcal I}\|\Phi(s,\mu_1)\|\,ds
+\\+ \xi^2e^{2\mathfrak n (\mu_2)}\cdot \int_{\mathcal I}\|\Phi(s,\mu_1)-\Phi(s,\mu_2)\|\,ds= \xi e^{\mathfrak n (\mu_2)}\,\|X^0(\mu_1)-X^0(\mu_2)\|+\\+
\xi^3e^{2\mathfrak n (\mu_1)+\mathfrak n (\mu_2)}\left(\|X^0(\mu_1)\|+\xi e^{\mathfrak n (\mu_1)}+e^{\mathfrak n (\mu_2)}\int_{\mathcal I}\|\Phi(s,\mu_1)\|\,ds\right)\int_{\mathcal I} \|A(s,\mu_1)-A(s,\mu_2)\|\,ds+\\+
 \xi^2e^{2\mathfrak n (\mu_2)}\cdot \int_{\mathcal I}\|\Phi(s,\mu_1)-\Phi(s,\mu_2)\|\,ds \quad (t\in\mathcal I,\,\mu_1,\mu_2\in\mathcal M),
\end{multline*}
which concludes the proof of Lemma \ref{lrisch4}.
\end{proof}

Now, we estimate from above the distance
$\rho\bigl(Y(\cdot,\mu_1),\,Y(\cdot,\mu_2)\bigr)=\widehat{\rho}(\mu_1,\mu_2).$
\begin{lemma}
\label{lrisch5}
\begin{multline}
\label{linex20}
\widehat{\rho}(\mu_1,\mu_2)\leq \\
\leq \widetilde K_1\,\|X^0(\mu_1)-X^0(\mu_2)\|+\widetilde K_2\,\int_{\mathcal I} \|A(t,\mu_1)-A(t,\mu_2)\|\,dt+\widetilde K_3\,\int_{\mathcal I} \|\Phi(t,\mu_1)-\Phi(t,\mu_2)\|\,dt,
\end{multline}
where $\widetilde K_1=1+K^2e^K,\;\;\widetilde K_2=K^2e^K(1+Ke^K+K^3e^{2K}+2K^3e^{3K}),\;\;\widetilde K_3=1+K^3e^{2K}.$
\end{lemma}

\begin{proof}[Proof of Lemma \ref{lrisch5}]
According to (\ref{linex16}) we have
\begin{multline*}
\widehat{\rho}(\mu_1,\mu_2)=\|X^0(\mu_1)-X^0(\mu_2)\|+\int_{\mathcal I}\|\dot{Y}(t,\mu_1)-\dot{Y}(t,\mu_2)\|\,dt\leq 
\mathfrak x (\mu_1,\mu_2)+\\+\int_{\mathcal I}\|A(t,\mu_1)\|\,\|Y(t,\mu_1)-Y(t,\mu_2)\|\,dt+
\int_{\mathcal I} \|A(t,\mu_1)-A(t,\mu_2)\|\,\|Y(t,\mu_2)\|\,dt+\mathfrak f (\mu_1,\mu_2)\leq \\  
\leq \mathfrak x\, (\mu_1,\mu_2)+K\Bigl(Ke^K\mathfrak x\,(\mu_1,\mu_2)+K^4e^{3K}(1+2e^K)\mathfrak a (\mu_1,\mu_2)+K^2e^{2K}\mathfrak f \,(\mu_1,\mu_2)\Bigr)+\\+ K^2\bigl(e^K+Ke^{2K}\bigr)\mathfrak a \,(\mu_1,\mu_2)
+\mathfrak f (\mu_1,\mu_2)=\widetilde K_1\mathfrak x\, (\mu_1,\mu_2)+\widetilde K_2\mathfrak a (\mu_1,\mu_2)+\widetilde K_3\mathfrak f (\mu_1,\mu_2),
\end{multline*}
as needed.
\end{proof}

Now, we are ready to complete the proof of Theorem \ref{thm3}.
Let $\varepsilon >0$ be arbitrary. By our assumptions there is $\delta>0$ such that if
$\|\mu_1-\mu_2\|<\delta$,  then 
\begin{multline*}
\mathfrak x (\mu_1,\mu_2)< \dfrac{\varepsilon}{3K_1},\quad \mathfrak a (\mu_1,\mu_2)<\dfrac{\varepsilon}{3K_2},\quad \mathfrak f (\mu_1,\mu_2)<\dfrac{\varepsilon}{3K_3},\\ 
\mathfrak x (\mu_1,\mu_2)<\dfrac{\varepsilon}{3\widetilde K_1},\quad 
\mathfrak a (\mu_1,\mu_2)<\dfrac{\varepsilon}{3\widetilde K_2},\quad \mathfrak f (\mu_1,\mu_2)<\dfrac{\varepsilon}{3\widetilde K_3}.
\end{multline*}
This esimate, combined with
Lemma \ref{lrisch4} (estimates (\ref{linex19})) and \ref{lrisch5}), implies that
$$
\|Y(t,\mu_1)-Y(t,\mu)\|< \varepsilon \quad (t\in \mathcal I),\qquad \rho\bigl(Y(\cdot,\mu_1),\,Y(\cdot,\mu_2)\bigr)< \varepsilon.
$$
The proof of Theorem \ref{thm3} is complete.
\end{proof}

\begin{theorem} 
\label{thm4}
The assertion of Theorem \ref{thm3} remains true if we replace in its formulation $\widehat{\mathfrak n},\,\varphi,\,\mathfrak a,\,\mathfrak f$ with, respectively,
\begin{multline*}
\widehat{\mathfrak n}_q (\mu)\doteq \left(\int_{\mathcal I}\|A(\mu)\|^q\,dt\right)^{\frac{1}{q}},\quad \varphi_q(\mu)\doteq \left(\int_{\mathcal I}\|\Phi(\mu)\|^q\,dt\right)^{\frac{1}{q}},\\
\mathfrak a_p(\mu_1,\mu_2)\doteq \left(\int_{\mathcal I}\|A(t,\mu_1)-A(t,\mu_2)\|^p\,dt\right)^{\frac{1}{p}},\quad \mathfrak f_p(\mu_1,\mu_2)\doteq \left(\int_{\mathcal I}\|\Phi (t,\mu_1)-\Phi (t,\mu_2)\|^p\,dt\right)^{\frac{1}{p}}
\end{multline*}
$\left(1<p<+\infty,\;\frac{1}{p}+\frac{1}{q}=1\right),$ 
\end{theorem}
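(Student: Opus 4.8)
The plan is to deduce Theorem~\ref{thm4} from Theorem~\ref{thm3} by a single application of Hölder's inequality, the point being that the $L^p$- and $L^q$-quantities in the new hypotheses dominate the $L^1$-quantities of Theorem~\ref{thm3}. Indeed, pairing each integrand against the constant weight $1$ and using $\tfrac1p+\tfrac1q=1$ gives, for all $\mu,\mu_1,\mu_2\in\mathcal M$,
\begin{gather*}
\widehat{\mathfrak n}(\mu)\le |\mathcal I|^{1/p}\,\widehat{\mathfrak n}_q(\mu),\qquad \varphi(\mu)\le |\mathcal I|^{1/p}\,\varphi_q(\mu),\\
\mathfrak a(\mu_1,\mu_2)\le |\mathcal I|^{1/q}\,\mathfrak a_p(\mu_1,\mu_2),\qquad \mathfrak f(\mu_1,\mu_2)\le |\mathcal I|^{1/q}\,\mathfrak f_p(\mu_1,\mu_2),
\end{gather*}
where $|\mathcal I|$ denotes the Lebesgue measure of $\mathcal I$. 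Hence boundedness of $\widehat{\mathfrak n}_q,\varphi_q$ forces boundedness of $\widehat{\mathfrak n},\varphi$, and integral $L^p$-uniform continuity of $A,\Phi$ forces their integral uniform continuity in the $L^1$ sense used in Theorem~\ref{thm3}; the hypotheses on $\eta$ and on $X^0$ are literally the same in both statements. All hypotheses of Theorem~\ref{thm3} are therefore in force, and its conclusion transfers verbatim.

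Alternatively, and more instructively, I would reprove Lemmas~\ref{lrisch2}--\ref{lrisch5} with Hölder's inequality in place of the crude estimate $\int fg\le(\sup g)\int f$. In Lemma~\ref{lrisch2} the pointwise bounds (\ref{linex9})--(\ref{linex10}) issue from Gronwall's inequality, whose exponent is $\int_{t_0}^t\|A(s,\mu)\|\,ds\le\widehat{\mathfrak n}(\mu)\le|\mathcal I|^{1/p}\widehat{\mathfrak n}_q(\mu)$, so that $\|X\|,\|X^{-1}\|,\|C\|$ stay uniformly bounded with the constant $K$ of (\ref{linex16ner}) merely enlarged. Thereafter every difference factor $\|A(t,\mu_1)-A(t,\mu_2)\|$ (resp. $\|\Phi(t,\mu_1)-\Phi(t,\mu_2)\|$) is paired by Hölder with the remaining bounded factor $g$, producing $\mathfrak a_p\,\|g\|_{L^q(\mathcal I)}$ (resp. $\mathfrak f_p\,\|g\|_{L^q(\mathcal I)}$) in place of $\mathfrak a\,\sup g$ (resp. $\mathfrak f\,\sup g$). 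Propagating this through (\ref{linex11})--(\ref{linex14}) and Lemmas~\ref{lrisch3}--\ref{lrisch4} yields an estimate of the shape (\ref{linex19}) with $\mathfrak a,\mathfrak f$ replaced by $\mathfrak a_p,\mathfrak f_p$ and with new constants, whence Lemma~\ref{lrisch5} delivers $\widehat{\rho}(\mu_1,\mu_2)\le\widetilde K_1\mathfrak x(\mu_1,\mu_2)+\widetilde K_2\mathfrak a_p(\mu_1,\mu_2)+\widetilde K_3\mathfrak f_p(\mu_1,\mu_2)$; the concluding $\varepsilon$--$\delta$ step is then unchanged.

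The step I expect to be the genuine obstacle is the control of the weights $\|g\|_{L^q(\mathcal I)}$. The functions $g$ that occur are products of $\|X\|$, $\|X^{-1}\|$, $\|C\|$, $\|Y\|$ and of the constant $1$; they are only uniformly bounded on $\mathcal I$ and do not decay, so $\|g\|_{L^q(\mathcal I)}$ is finite and uniform in $\mu$ exactly when $|\mathcal I|<\infty$. Both the clean reduction and the Hölder reproof thus rest on the finiteness of the measure of $\mathcal I$ (equivalently, on $1\in L^q(\mathcal I)$); for an unbounded $\mathcal I$ one would have to assume in addition that these matrix weights lie in $L^q$, which the stated hypotheses do not by themselves guarantee. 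Granting this, no idea beyond Hölder's inequality and the attendant bookkeeping of constants is needed.
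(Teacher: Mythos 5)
Your proposal is correct, and its second half is exactly the paper's (one-line) proof: rerun Lemmas \ref{lrisch2}--\ref{lrisch5}, replacing each estimate of an integral of a product by an application of H\"older's inequality. Your first half, however, is a genuinely different and shorter route: pairing each integrand against the constant weight $1$ shows that boundedness of $\widehat{\mathfrak n}_q,\varphi_q$ and $L^p$-integral uniform continuity of $A,\Phi$ already imply the $L^1$ hypotheses of Theorem \ref{thm3}, so Theorem \ref{thm4} follows verbatim as a corollary rather than by reproving anything; the price is the factor $|\mathcal I|^{1/p}$ (resp.\ $|\mathcal I|^{1/q}$), whereas the paper's in-proof H\"older application would keep the final estimate expressed directly in terms of $\mathfrak a_p,\mathfrak f_p$ with sharper constants. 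Your diagnosis of the real obstruction is also on point and is something the paper's proof silently glosses over: every occurrence of $\|A(t,\mu_1)-A(t,\mu_2)\|$ or $\|\Phi(t,\mu_1)-\Phi(t,\mu_2)\|$ that is not multiplied by an $L^q$ factor (e.g.\ in (\ref{linex8}), (\ref{linex11})--(\ref{linex14}), and the $\int_{\mathcal I}\|A_1-A_2\|\,\|Y_2\|\,dt$ term of Lemma \ref{lrisch5}) must be H\"oldered against a weight that is merely uniformly bounded, and the Gronwall exponent $\int_{\mathcal I}\|A(s,\mu)\|\,ds$ must still be uniformly bounded even though only $\widehat{\mathfrak n}_q$ is assumed bounded; both demands are met precisely when $|\mathcal I|<\infty$, i.e.\ $1\in L^p(\mathcal I)\cap L^q(\mathcal I)$, and otherwise require extra integrability hypotheses that neither the statement nor the paper's proof supplies. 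So your argument is complete under the (apparently intended but unstated) assumption that $\mathcal I$ has finite measure, and you have correctly identified the gap in the general case.
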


\begin{proof}
In the proof of Theorem \ref{thm3}, in the estimates of the integrals of products one has to apply H\"{o}lder inequality.
\end{proof}


\begin{thebibliography}{99}




\bibitem{coddl}

Coddington E. A., Levinson N. {\it Theory of Ordinary Differential Equations}. New York  Toronto  London:McGraw-Hill, 1955, 415 p. 


\bibitem {hartm}
Hartman P. {\it Ordinary Differential Equations}, New York  London  Sydney: JOHN  WILEY\&SONS 1964,  

\bibitem {colombo}
Colombeau J. F. {\it Elementary introduction to new generalized functions.} NHMS: North Holland, Amsterdam.~--- 1985,~--- 300 p.




\end{thebibliography}
\end{document}